\newcommand{\F}{\mathbb{F}}
\newcommand{\PP}{\mathbb{P}}
\newcommand{\Fbar}{{\overline{\F}}}
\DeclareMathOperator{\Cl}{Cl}
\DeclareMathOperator{\Frob}{Frob}
\DeclareMathOperator{\Pic}{Pic}
\newcommand{\gon}{\operatorname{gon}}
\newtheorem{theorem}{Theorem}[section]
\newtheorem{lemma}[theorem]{Lemma}
\theoremstyle{definition}
\newtheorem{definition}[theorem]{Definition}
\newtheorem{question}[theorem]{Question}
\theoremstyle{remark}
\newtheorem{remark}[theorem]{Remark}
\title[Exponents of Jacobians and relative class groups]{Exponents of Jacobians and relative class groups}
\author{Borys Kadets}
\address{Borys Kadets, Einstein Institute of Mathematics\\ Hebrew University of Jerusalem\\
Jerusalem, Israel}
\email{kadets.math@gmail.com}
\author{Daniel Keliher}
\address{Daniel Keliher, Department of Mathematics and Statistics\\Middlebury College\\
Middlebury, VT, USA}
\email{dkeliher@middlebury.edu}
\date{\today}
\begin{document}
\begin{abstract}
   We prove a lower bound for the exponent of the relative class group $\Pic^0 X_1/\phi^* \Pic^0 X_2$ for a covering of curves $\phi\colon X_1 \to X_2$ over a finite field $\F_q$. The results improve on the existing best bounds (due to Stichtenoth) in the case $X_2=\PP^1$, when the relative class group equals the class group of the function field $\F_q(X_1)$, and are completely new for the genuinely relative situation.
\end{abstract}

\keywords{global function field, Jacobian, class group, curve over finite field} \subjclass{11R58, 11R29, 14H40, 11G20}

\maketitle
\section{Introduction}

Suppose $K$ is a global function field over a finite field $\F_q$. The class group, $\Cl(K)$, of $K$ is known to geometers as the group of rational points on the Jacobian of the smooth, proper, integral curve $X$ such that $K=\F_q(X)$. There has been much work on trying to understand the structure of $\Cl(K)$ in both the language of function field arithmetic and the language of curves over finite fields. We adopt the latter perspective from now on.

The easier part of the structure is the size of $\Cl K = \Pic^0 X$, as it is preserved by isogenies and can be described purely in terms of the eigenvalues of the Frobenius. The Weil bounds give the following inequality
\[\left(q^{1/2}-1\right)^{2g} \leqslant \# \Pic^0X \leqslant \left(q^{1/2}+1\right)^{2g},\]
which can be further refined   \cite{lebacque2014number}. A more delicate question is to understand the group structure of $\Pic^0 X$. Consider a curve $X$ of large genus over a fixed finite field, $\F_q$. Then, since $\Pic^0 X$ is a set of rational points on the $g$-dimensional abelian variety $\Pic_X^0$, the Weil bounds do not directly contradict the possibility that $\Pic^0 X$ is $N$-torsion, for $N$ bounded with respect to $g$ (since there are $N^{2g}$ torsion points of order $N$). Thus it is interesting to study the \emph{exponent} of the group $\Pic^0 X$.

The study of class group exponents has an honorable pedigree: the problem of classifying hyperelliptic function fields of class number two appears in Emil Artin's thesis \cite{ArtinThesis1, ArtinThesis2} (which itself was one of the starting points for function field arithmetic \cite{RoquetteBook, oort2016early}). A series of investigations followed, e.g.,~\cite{LeitzelMadanQueen, MaddenQuadratic}, studying function fields with small class number. Finally, with the help of modern computational tools, Bautista-Ancona and Diaz-Vargas \cite{BautistsaAnconaDiazVargas} were able to list all hyperelliptic curves with $2$-torsion class group.

The fact that the exponent of the class group grows with the genus was first established in a series of works of Madan and Madden \cite{MadanMadden1976, MadanMadden1977, MadanMadden1977_degree}. The bound was further improved by the following result of Stichtenoth \cite{Stichtenoth1979}.

\begin{theorem}[\protect{\cite[Theorem 1]{Stichtenoth1979}}]
Suppose $X/\F_q$ is a smooth, proper, integral curve of genus $g$. Then the exponent $e$ of $\Pic^0 X$ satisfies 
    \[e > \frac{g^{1/3}}{4 \log g}.\]
\end{theorem}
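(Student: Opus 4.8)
The plan is to recast the claim as an upper bound on the genus: since $e \geqslant \sqrt q - 1$ always holds, the inequality $e > g^{1/3}/(4\log g)$ is equivalent to a bound of the shape $g \ll e^{3}(\log g)^{3}$, and this is what I would aim for. The cornerstone is a rigidity observation. Fix a divisor $D_0$ of degree $1$ (which exists by F.~K.~Schmidt's theorem), and note that for any divisor $D$ of degree $d$ we have $D - dD_0 \in \Pic^0 X$, so $e(D-dD_0)\sim 0$ and hence $eD \sim ed\,D_0$. Thus $[eD]$ depends only on $\deg D$, and for every effective divisor $E$ of degree $m$ the divisor $eE$ lies in the single complete linear system $|em\,D_0|$. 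As $E \mapsto eE$ is injective on effective divisors, this gives the key inequality
\[ A_m \;\leqslant\; \frac{q^{\ell(em D_0)}-1}{q-1}, \]
where $A_m$ denotes the number of effective divisors of degree $m$. Moreover, for two distinct closed points $P,P'$ of a common degree $d$ one gets $eP \sim ed\,D_0 \sim eP'$, so $|eP|$ contains two distinct members and $\gon(X) \leqslant ed$; since the Weil bounds force a degree $d = O(\log_q g)$ with at least two closed points, this yields $\gon(X) \leqslant e\cdot O(\log_q g)$, a bound that is insensitive to the base field.

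First I would control the two sides of the key inequality. On the upper side, when $em \leqslant 2g-2$ the divisor $emD_0$ is special, so Clifford's theorem gives $\ell(emD_0) \leqslant \tfrac12 em + 1$ and hence $A_m < q^{\,em/2+1}$; outside the special range Riemann--Roch gives $\ell(emD_0) = em - g + 1$. On the lower side I would produce many effective divisors. Two routes feed into this: either bound $A_m$ from below directly, or use the gonality estimate together with a point count $N_n = \#X(\F_{q^n}) \leqslant \gon(X)\,(q^n+1) \leqslant e\cdot O(\log_q g)\cdot(q^n+1)$, and seek a level $n$ at which the curve carries substantially more than $q^n$ points. Feeding the special-range estimate against such a lower bound and optimizing the degree $m$ (respectively the level $n$) is what forces $e$ to be large: the Clifford exponent $q^{\,em/2}$ must dominate a point-count that grows with $g$, and the logarithmic size of the minimal multi-point degree $d$ is exactly what converts the resulting relation into $e \gtrsim g^{1/3}/\log g$, with the constant $4$ emerging from the optimization.

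The hard part will be the lower bound on effective divisors, and this is where the real content of Stichtenoth's theorem lies. The naive global counts are useless: both $A_m \approx q^{m}$ and $\#|emD_0| \approx q^{\,em-g}$ scale like $c^{g}$, so the genus-dependence cancels and one recovers only the constant bound $e \geqslant \sqrt q - 1$. Extracting a bound that grows with $g$ therefore requires an arithmetic input guaranteeing that some short extension $\F_{q^n}$ carries many more than $q^n$ rational points --- equivalently, that the Frobenius eigenvalue angles align sufficiently at a controlled level $n$ --- for \emph{every} curve, not merely point-rich ones. The saving grace is that the gonality bound $\gon(X) \leqslant ed$ is field-insensitive, so one may pass freely to $\F_{q^n}$ without losing control of $e$ even though $\Pic^0 X \subseteq \Pic^0 X_{\F_{q^n}}$ makes the exponent itself grow; this decouples the point-production step from the exponent. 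Making the alignment effective with an explicit dependence on $g$, presumably via an explicit-formula or averaging argument over several levels $n$, and then carrying out the optimization against the Clifford estimate, is the technical heart of the proof and the source of both the exponent $1/3$ and the logarithmic factor.
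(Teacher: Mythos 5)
Your opening reductions are correct, and the first of them is genuinely the right starting point: from $eD \sim e\deg(D)\,D_0$ you deduce that any two distinct closed points $P,P'$ of a common degree $d$ satisfy $eP \sim eP'$, so the pencil $|eP|$ gives a map to $\PP^1$ of degree $ed$ and hence $\gon(X)\leqslant ed$ with $d=O(\log_q g)$ supplied by the Weil bounds. (This is exactly Step 1 of the paper's Theorem \ref{thm:exponent}; note the paper does not reprove Stichtenoth's theorem but proves a stronger $g^{1/2}$ bound.) The Clifford/Riemann--Roch estimate $A_m \leqslant q^{em/2+1}$ for $em\leqslant 2g-2$ is also fine. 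The problem is that everything after that is a declaration of intent rather than a proof: the decisive step --- exhibiting, for every curve, a degree $m$ at which $A_m$ exceeds $q^{em/2+1}$ --- is never carried out, and you explicitly defer it (``presumably via an explicit-formula or averaging argument''). That is not a routine omission one can wave at. In the regime you target, $m\asymp \log_q g$, the number of closed points of degree at most $m$ is $O(gq^{m/2})=g^{O(1)}$, so $A_m\leqslant g^{O(\log_q g)}$, whereas defeating the Clifford bound with $e\asymp g^{1/3}$ would require $A_m\geqslant q^{em/2}=g^{\Omega(g^{1/3})}$. No alignment of Frobenius angles can bridge that, since the Weil bound caps the excess of $\#X(\F_{q^n})$ over $q^n+1$ by $2gq^{n/2}$; and for larger $m$ you have neither identified a candidate nor shown the requisite lower bound on $A_m$ is attainable for an arbitrary curve. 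So the framework itself, not merely a missing lemma, is the obstruction to reaching a bound that grows like a power of $g$.

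What actually closes the argument --- in sharper form in this paper, and in field-theoretic disguise in Stichtenoth --- is a second geometric input that you do not use: the equivalence $e\tilde P\equiv e\tilde Q$ realizes $e\tilde P$ as a \emph{fiber} of a degree-$ed$ map $f\colon X\to\PP^1$, and one plays $f$ against a gonal map $\gamma$ via the Castelnuovo--Severi inequality. If $(ed-1)(\gon X-1)<g$ then $f$ and $\gamma$ factor through a common covering $\psi\colon X\to Z$, and choosing $P$ non-fibral for $\gamma$ (Lemma \ref{lem:non-fibralcount} guarantees such $P$ exist in degree $\asymp\log_q g$) forces the fiber $f^{-1}(f(P))=\tilde P$ to contain the strictly larger set $\psi^{-1}(\psi(P))$, a contradiction. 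Hence $g\leqslant(\gon X-1)(ed-1)$, which combined with your own inequality $ed\geqslant\gon X$ yields $e\gtrsim\sqrt{g}/\log_q g$, already stronger than the $g^{1/3}$ claim. Your proposal contains one jaw of this pincer and replaces the other with a divisor-counting scheme that cannot deliver a bound growing with $g$; as it stands the argument proves only $e\geqslant\sqrt q-1$ plus the gonality inequality, not the stated theorem.
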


The proofs of these results were given in field-theoretic language and rely on choosing a sufficiently nice transcendental element in the field, i.e.,~a covering $f: X \to \PP^1$. Our first main result is an improvement of the bound, proved using a more direct geometric approach. Our bound is stated additionally in terms of the \emph{gonality}, $\gon X$, of $X$: the minimal degree of a rational function $\gamma: X \to \PP^1$. 
\begin{theorem}\label{thm:exponent}
    Suppose $X/\F_q$ is a smooth proper integral curve of gonality $\gon X>1$. Then the exponent $e$ of $\Pic^0 X$ satisfies
    \[e \geqslant \max\left(\frac{\gon X}{2\left\lceil\log_q (2g+1)\right\rceil}, \frac{g}{4(\gon X -1)\left\lceil\log_q(7g+1)\right\rceil}\right).\]
    In particular, regardless of the value of $\gon X$, we have 
    \[e \geqslant \frac{g^{1/2}}{4\lceil\log_q(7g+1)\rceil}.\]
\end{theorem}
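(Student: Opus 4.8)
The plan is to establish the two estimates inside the maximum separately and then to balance them. Both rest on a single principle relating \emph{orders of divisor classes} to \emph{maps to $\PP^1$}: if $Q$ is a closed point of degree $\delta$, then $h^0(nQ)=1$ for every $n$ with $n\delta<\gon X$, since a second global section would cut out a base-point-free pencil of degree $\leqslant n\delta<\gon X$. Consequently, for a second closed point $P\neq Q$ of degree $\delta$, the class $[P-Q]\in\Pic^0 X$ cannot satisfy $nP\sim nQ$ for such $n$ (that relation would require a nonconstant function with poles bounded by $nQ$), so its order is at least $\gon X/\delta$; as $e$ is a multiple of the order of every element, $e\geqslant \gon X/\delta$.

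For the first estimate I would produce two distinct closed points of a common small degree. The Weil bound $\#X(\F_{q^m})\geqslant q^m+1-2gq^{m/2}$ shows that the number of closed points of degree exactly $m$ is at least two as soon as $q^{m/2}$ exceeds a small multiple of $2g$; hence there is a degree $\delta\leqslant 2\lceil\log_q(2g+1)\rceil$ carrying two distinct closed points $P\neq Q$. Feeding this $\delta$ into the principle above yields $e\geqslant \gon X/\delta\geqslant \gon X/(2\lceil\log_q(2g+1)\rceil)$, which is the first term in the maximum.

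The second estimate is the heart of the matter and the step I expect to be hardest. Now I must exhibit a class of order comparable to $g/((\gon X-1)\lceil\log_q(7g+1)\rceil)$, i.e.\ push the vanishing $h^0(nQ)=1$ well past the range $n\delta<\gon X$ guaranteed above. The idea is to choose $Q$ of degree $\delta\approx\lceil\log_q(7g+1)\rceil$---there are on the order of $q^\delta/\delta$ such points---and to argue that for a \emph{generic} such $Q$ the divisors $nQ$ impose as many conditions on the canonical system as the geometry allows, so that $h^0(nQ)=1$ persists up to $n$ of size roughly $g/((\gon X-1)\delta)$; the class $[P-Q]$ then has order at least this quantity. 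The delicate input is the quantitative first-pole-number bound: on a curve carrying a pencil of degree $d=\gon X$ the canonical model lies on a scroll, points need not impose independent conditions on $|K|$, and one must control this failure by the gonality. I expect it to come from Clifford's theorem sharpened by $d$ (bounding $h^0$ of a special $nQ$ by about $1+n\delta/d$) together with the Riemann--Hurwitz relation for $\gamma\colon X\to\PP^1$, whose $(d-1)$ factor is the source of the $(\gon X-1)$ in the denominator; the characteristic-$p$ and higher-degree-point ($\delta>1$) bookkeeping is where the real work sits.

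Finally, the displayed $g^{1/2}$ bound follows by balancing. Writing $A$ and $B$ for the two terms in the maximum, $e\geqslant\max(A,B)\geqslant\sqrt{AB}$; since $\gon X/(\gon X-1)\geqslant 1$ and $\lceil\log_q(2g+1)\rceil\leqslant\lceil\log_q(7g+1)\rceil$ one finds $AB\geqslant g/(8\lceil\log_q(7g+1)\rceil^2)$, so $e\geqslant g^{1/2}/(2\sqrt2\,\lceil\log_q(7g+1)\rceil)$. Replacing $2\sqrt2\,\lceil\log_q(7g+1)\rceil$ by the slightly larger $4\log_q(7g+1)$---valid once $\log_q(7g+1)$ is bounded below, with the few remaining small cases checked by hand---gives the stated inequality $e\geqslant g^{1/2}/(4\log_q(7g+1))$.
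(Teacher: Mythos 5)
Your first bound and the closing balancing step are essentially the paper's: two distinct low-degree orbits give distinct effective divisors $e\tilde{P}\equiv e\tilde{Q}$ of degree below $\gon X$, which is impossible. (Your final constant-chasing via $\max(A,B)\geqslant\sqrt{AB}$ leaves a loose end --- $\lceil x\rceil\leqslant\sqrt{2}\,x$ fails for all $x<1/(\sqrt2-1)\approx 2.41$, so the ``small cases'' are infinitely many curves for each $q$ --- but that is repairable.) The genuine gap is exactly where you flag it: the second term of the maximum. You propose to show $h^0(nQ)=1$ for generic $Q$ of degree $\delta$ up to $n\approx g/((\gon X-1)\delta)$ by means of a ``Clifford theorem sharpened by the gonality,'' i.e.\ a bound of the shape $h^0(nQ)\leqslant 1+n\delta/\gon X$. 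This tool cannot do the job, for two reasons. First, such a bound is not a theorem: taking $D=K$ on a $d$-gonal curve with $d\geqslant 3$ gives $h^0(K)=g>1+(2g-2)/d$ for $g\geqslant 2$. Second, and more fundamentally, even if it held for divisors of the form $nQ$ it would only force $h^0(nQ)<2$ when $n\delta<\gon X$ --- precisely the range you already covered with the first bound --- whereas the regime you need is $n\delta\approx g/(\gon X-1)\gg\gon X$, where the proposed inequality happily permits $h^0(nQ)=2$. No Clifford-type estimate extends the vanishing past $\gon X$; a different mechanism is required.

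The paper's mechanism is the Castelnuovo--Severi inequality combined with a counting argument. If $h^0(nQ)\geqslant 2$ (more precisely, if $e\tilde{P}\equiv e\tilde{Q}$), one gets a second map $f\colon X\to\PP^1$ of degree $ek<g/(\gon X-1)$ with a fiber supported on the orbit of $Q$; since $(\deg f-1)(\deg\gamma-1)<g$, Castelnuovo--Severi forces $f$ and the gonal map $\gamma$ to factor through a common covering $\psi\colon X\to Z$ of degree $>1$, and then $f^{-1}(f(Q))\supseteq\psi^{-1}(\psi(Q))$ must contain points outside the orbit of $Q$. The paper's Lemma on ``non-fibral'' points (a Weil-bound plus Riemann--Hurwitz count) supplies, for a prime $k\approx\log_q g$ up to the Bertrand factor, many degree-$k$ points whose $\gamma$-fibers (hence $\psi$-fibers) are strictly larger than their Galois orbits, which yields the contradiction. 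Your sketch contains neither the Castelnuovo--Severi step nor the quantitative genericity statement that makes it bite, so as written the second inequality --- and with it the $g^{1/2}$ bound --- is not established.
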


The theorem above comes as a consequence of Theorem \ref{thm:mainfull}, which additionally estimates the number of elements of $\Pic^0 X$ of a given order.

The advantage of the geometric approach employed here, beyond  the analytic improvement, is an increased flexibility. By using essentially the same arguments, we are able to obtain results for the relative class group; a case in which no bounds were previously established. 

\begin{theorem}[Theorem \ref{thm:relative-full}]\label{thm:relative}
      Suppose $\phi:X_1 \to X_2$ is a separable covering of curves of respective genera $g_1, g_2$. Suppose $g_1 \geqslant 2$ and $g_2>0$. Consider the exponent $e$ of the relative class group $\Pic^0 X_1/\phi^{*}\Pic^0 X_2$. Then 
\begin{enumerate}
    \item $e \geqslant \dfrac{g_1^{1/2}}{8 \lceil \log_q(19g_1) \rceil (\deg \phi-1)}$;
    \item 
    $e \geqslant \left\lfloor c \dfrac{ \sqrt{g_1}}{g_2} \right\rfloor$, where $c=\min \left(1/16, \dfrac{g_2}{4\log_q(14g_1+1)}\right).$
\end{enumerate}
\end{theorem}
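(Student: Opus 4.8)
The plan is to mirror the proof of Theorem~\ref{thm:exponent}, replacing the target $\PP^1$ of the gonality pencil by $X_2$, the gonality map by $\phi$, and the group $\Pic^0 X$ by the relative class group $G=\Pic^0 X_1/\phi^{*}\Pic^0 X_2$. The structural observation that makes this transport work is that, for two closed points $t,t'$ of $X_2$ of equal degree, the fibres satisfy $\phi^{-1}(t)-\phi^{-1}(t')=\phi^{*}(t-t')$ with $[t-t']\in\Pic^0 X_2$, so these two effective divisors on $X_1$ become equal in $G$. Thus the fibres of the degree-$\delta$ map $\phi$ (set $\delta=\deg\phi$) are all equivalent in $G$, exactly as the fibres of a pencil are literally equal in $\Pic^0$ over $\PP^1$, together with the identity $\phi_{*}\phi^{*}=\delta$ on $\Pic^0$. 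Consequently I expect $\delta$ to occupy the role played by $\gon X$ in the absolute bound and $g_1$ the role of $g$, while $X_2$ contributes its own invariants $g_2$ and $\#X_2(\F_{q^n})$.

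The engine is the same injectivity-plus-counting argument. Fixing a degree $m$, the relative exponent hypothesis forces every $e$-th multiple $eD$ of an effective divisor $D$ of degree $m$ into a single class of $G$; since $D\mapsto eD$ is injective on divisors, the number $A_m(X_1)$ of effective divisors of degree $m$ on $X_1$ is bounded above by the number of effective divisors lying in that relative class, namely by $\sum_{E\in\Pic^0 X_2}\#|eD_0+\phi^{*}E|$ for a fixed $D_0$. The Weil lower bound for $A_m(X_1)$ and the estimate $\#\Pic^0 X_2\approx q^{g_2}$ turn this into a numerical inequality; the essential input is an upper bound on each $h^0(eD_0+\phi^{*}E)$ by a Clifford–gonality estimate relative to $\phi$, in which the degree-$\delta$ cover contributes the factor $\delta-1$. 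Optimising the free parameters $m$ and the extension degree $n\approx\log_q g_1$ (needed so that $\F_{q^n}$ sees enough points and fibres) in two regimes yields the two displayed inequalities: comparing directly against $\delta$ gives part~(1), while spreading the divisors over the $\approx\#X_2(\F_{q^n})$ fibres above $X_2(\F_{q^n})$—whose number is governed by $g_2$ through the Weil bound—gives part~(2) and is the source both of the $g_2$ in its denominator and of the dichotomy $c=\min(1/16,\,g_2/(4\log_q(14g_1+1)))$ (the first term a crude estimate valid for large $g_2$, the second the regime where $X_2$ has many points).

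Along the way I would record why the hypotheses are exactly right: $g_2>0$ guarantees that $\Pic^0 X_2$ is nonzero, so that there is something to quotient by and the dictionary is non-degenerate, while $g_1\ge 2$ ensures $X_1$ is neither rational nor elliptic and that $A_m(X_1)$ genuinely grows. Riemann–Hurwitz, $2g_1-2=\delta(2g_2-2)+\deg R$, is used to keep the dependence on $\delta$ and $g_1$ compatible when the two sides of the counting inequality are compared, and the floor in part~(2) simply reflects that $e$ is a positive integer.

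The main obstacle is the relative Clifford–gonality bound: controlling $h^0(eD_0+\phi^{*}E)$ uniformly in $E\in\Pic^0 X_2$ in terms of $\delta$ and the degree, so as to extract the clean factor $\delta-1$. Over $\PP^1$ this is the classical fact that a $\mathfrak g^{r}_{\nu}$ forces $\nu\ge(r-1)\gon X+\gon X$ after projection, but over a positive-genus base $X_2$ one must also account for the sections pulled back from $X_2$, which is what couples $g_2$ into the estimate. A second, more conceptual difficulty is that the count naturally bounds only the order of $G$, whereas we need a lower bound on its \emph{exponent}; the degree-$\delta$ structure is precisely what prevents the classes from clustering into a low-exponent subgroup, and making this quantitative—ideally through the finer count of elements of each order supplied by Theorem~\ref{thm:mainfull}—is where the real work lies. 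Finally there is the bookkeeping of passing between $\F_q$, where the exponent is defined, and $\F_{q^n}$, where the points and fibres are visible, which pins $n\approx\log_q g_1$ and must be handled carefully to preserve the explicit constants $8$, $19$, $14$, and $16$.
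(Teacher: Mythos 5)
Your proposal is a plan rather than a proof: you explicitly defer the two steps that carry all the content (the ``relative Clifford--gonality bound'' on $h^0(eD_0+\phi^{*}E)$ and the quantitative passage from counting classes to bounding the exponent), and neither is routine. Concretely, for the bound with $\deg\phi-1$ in the denominator, your scheme of bounding $A_m(X_1)$ by $\sum_{E\in\Pic^0X_2}\#|eD_0+\phi^{*}E|$ offers no visible mechanism for the factor $\deg\phi-1$ to appear: the standard upper bounds on $h^0$ of a line bundle of degree $em$ on $X_1$ involve $\gon X_1$, not $\deg\phi$, and you do not formulate (let alone prove) an estimate in which the covering degree enters. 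The paper's actual mechanism is different and is precisely the missing idea: if $e(\tilde{P}-\tilde{Q})$ lies in $\phi^{*}\Pic^0X_2$, then applying $\phi^{*}\phi_{*}$ yields the \emph{absolute} linear equivalence $e\phi^{*}\phi_{*}(\tilde{P}-\tilde{Q})\equiv e(\deg\phi)(\tilde{P}-\tilde{Q})$, which rearranges into an equivalence of effective divisors of degree $2ek(\deg\phi-1)$, i.e.\ a rational function $f$ of that degree; this is then ruled out by the Castelnuovo--Severi inequality applied to $f$ and the gonal map $\gamma$ of $X_1$, using points $P,Q$ chosen (by a Weil-bound count) to be non-fibral for both $\phi$ and $\gamma$, with $\phi_{*}\tilde{P}\neq\phi_{*}\tilde{Q}$ and an extra condition excluding the degenerate degree-two case. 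Nothing in your outline produces this equivalence or these auxiliary conditions.

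For the $\lfloor c\sqrt{g_1}/g_2\rfloor$ bound the paper's route is also simpler than what you sketch: it does not spread divisors over the fibres of $\phi$ above $X_2(\F_{q^n})$ (whose count has main term $q^n$ and would not produce the $\sqrt{g_1}/g_2$ shape), but directly invokes the refined conclusion of Theorem \ref{thm:mainfull} that $\Pic^0X_1$ contains $N(m)\geqslant q^{\sqrt{g_1}/4m}-7g_1$ explicit classes of order at least $m$ --- classes on which multiplication by any $e<m$ remains injective because it is realized by multiplying low-degree effective divisors --- and compares this with $\#\phi^{*}\Pic^0X_2\leqslant(\sqrt{q}+1)^{2g_2}$ by pigeonhole. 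That is exactly the order-versus-exponent step you flag as ``where the real work lies'' but leave open. As written, the proposal identifies plausible ingredients but does not close either inequality.
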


In stating Theorems \ref{thm:exponent} and \ref{thm:relative} we have made an effort to make nice-looking, easily comparable bounds, without sacrificing their asymptotic shape. The proof methods may be also used directly for specific numerical values, improving the estimates; see Remark \ref{rmk:relativefornumerics}.

\begin{remark}
Effectively, Theorem \ref{thm:relative} provides two bounds, one of which is better when $\deg \phi$ is small, and the other is better when $\deg \phi$ is large. These two bounds leave a scant collection of covers for which the exponent is still unbounded from below. What remains are the covers for which $g_1 \approx g_2^2$ and $\deg(\phi) \approx g_2$ (and which are thus almost unramified). In this regime, the exponent bounds become trivial. We do not know if this numerical coincidence reflects a shortcoming of our method, or if low exponent coverings do not in fact exist. We raise a precise version of this problem as Question \ref{mainquestion}.
\end{remark}
\begin{remark}
    Theorem \ref{thm:exponent} has no analogue for general abelian varieties. A simple counterexample is the power $E^n$ of an elliptic curve, whose exponent does not depend on $n$. Even restricting to simple abelian varieties does not resolve the issue, as there are infinitely many simple abelian varieties $A$ over $\F_2$ with $A(\F_2)=0$; see \cite{madan1977abelian}. For more recent work on possible group structures of $A(\F_q)$, see \cite{marseglia2025abelian}.
\end{remark}
\section{Outline and a sketch of the main argument}

We first describe the main idea of the proof of Theorem \ref{thm:exponent}. If $X/\F_q$ is a curve of genus $g\gg 0$, then by the Weil bounds, the set $X(\F_{q^k})$ contains at least two distinct Galois orbits $\tilde{P}, \tilde{Q}$ for $k \approx \log_q(g)$.  If $e$ is the exponent of $\Pic^0 X$, then we must have a linear equivalence of divisors $e\tilde{P} \equiv e\tilde{Q}$. Thus we can produce a map $f: X \to \PP^1$ of degree $e \log_q(g)$ realizing this equivalence. However, curves do not typically have multiple low-degree rational functions. If $f$ and $g$ are two rational functions of degree less than $g^{1/2}$, then by the Castelnuovo--Severi inequality (Theorem \ref{thm:CS}) there exists a nontrivial morphism from $X$ to another curve $Y$ such that both $f$ and $g$ factor through $Y$. Assuming such factorization does not exist implies the bound $e \gtrapprox \sqrt{g}/\log_q(g)$, as claimed in Theorem \ref{thm:exponent}. 

To turn this sketch into a complete argument, we analyze the ramification data attached to such maps $f$ and a fixed gonal map $\gamma$ to contradict the existence of $Y$. We do this in Section \ref{sec:pfofmain}, where we prove  a more general result, Theorem \ref{thm:mainfull}, from which Theorem \ref{thm:exponent} follows.

In Section \ref{sec:relative}, we turn to the relative setting. Given a morphism of curves $\phi: X_1 \to X_2$ we are similarly able to bound, in Theorem \ref{thm:relative-full}, the exponent $e$ of the relative class group $\Pic^0 X_1/ \phi^* \Pic^0 X_2$. Here we have to replace the linear equivalences $e\tilde{P} \equiv e \tilde{Q}$ by more complicated equivalences between low degree divisors. The main geometric observation is that a low degree divisor $D$ on $X_1$ gives rise to a linear equivalence $e(\phi^*\phi_*D)\equiv (e\deg \phi) D$, which again produces a rational function on $X_1$ with a controlled ramification pattern. 

\section{Conventions and Tools}

Throughout we work exclusively in the geometric category of curves over finite fields and their coverings. For a low-tech introduction see \cite{FultonCurves}. For treatments from the function field perspective, see \cite{StichtenothBook} and \cite{RosenBook}.
Throughout, the word \emph{curve} refers to a smooth, projective, geometrically integral, one-dimensional scheme over a finite field $\F_q$. The word \emph{covering} means a finite morphism $\phi\colon X \to Y$ over $\F_q$.  We use $|X|$ to denote the set of closed points of $X$; recall that these are in bijection with Galois orbits in $X(\Fbar_q)$, with the length of an orbit corresponding to the degree of the point. It is convenient to view a divisor $D$ on $X$ as a Galois-invariant sum of points in $X(\Fbar_q)$ (rather than of closed points), and the \emph{underlying set} of $D$ is the support of $D$ viewed as a subset of $X(\Fbar_q)$. Given a point $P \in X(\F_{q^k})$, a degree $k$ divisor \emph{associated} to $P$ is the unique degree $k$ multiple of the Galois orbit of $P$ (so that if $P$ is in fact a rational point, the associated divisor is $kP$).

\subsection{Classical formulas}
We will often use two standard formulas from the geometry of curves.

\begin{theorem}[Castelnuovo--Severi inequality; see \cite{kani1984castelnuovo}]\label{thm:CS}

Let $X,Y_1,Y_2$ be curves over $\mathbb{F}_q$ of genera $g_X,g_1,g_2$ equipped with maps $\pi_1: X \to Y_1$ and $\pi_2: X \to Y_2$ of degrees $n_1, n_2$, respectively. Suppose further there is no morphism $X \to Z$ of degree greater than one through which both $\pi_1$ and $\pi_2$ factor. Then, 
\begin{equation*}
    g_X \leq n_1g_1 + n_2g_2 + (n_1-1)(n_2-1).
\end{equation*}
\end{theorem}

\begin{theorem}[Riemann--Hurwitz inequality]\label{thm:RH}
    Suppose $\phi: X \to Y$ is a separable covering of curves. For a point $P \in |X|$ let $e(P)$ denote the ramification index of $\phi$ at $P$. Then the genera $g_X, g_Y$ of $X$ and $Y$ satisfy 
    \[2g_X-2 \geqslant \deg \phi(2g_Y-2)+\sum_{P \in |X|}\deg P(e(P)-1).\]
    In particular, the number of branch points in $X(\Fbar_q)$ is bounded from above by $(2g_X-2)-\deg \phi (2g_Y-2).$
\end{theorem}

\section{Exponents of Jacobians}\label{sec:pfofmain}

We first need to introduce a term for divisors which interact in as simple a way as possible with a rational function $f$.

\begin{definition}
    Given a curve $X$, a covering $f: X \to Y$, and an effective divisor $D$ on $X$, we say that $D$ is \emph{non-fibral for $f$} if the sets $f(D)$ and $D$ are equal in size, and the ramification index of $f$ at every point of $D$ is equal to the inseparable degree of $f$.
\end{definition}
Observe that if $f=i \circ j$, and $D$ is non-fibral for $f$, then $D$ is non-fibral for $j$ as well. Heuristically, most divisors on a curve should be non-fibral relative to a fixed $f$. The following lemma shows that there are many low degree points on a curve which, when viewed as divisors, are non-fibral relative to $f$.
\begin{lemma}\label{lem:non-fibralcount}
    If $X/\F_q$ is a curve of genus $g>0$, $f: X \to \PP^1$ is a rational function, and $k$ is a prime number, then there are at least 
    \[\left\lceil\frac{q^k}{k} - 2g\frac{q^{k/2}+1}{k} - \deg f \frac{q+3}{k}\right\rceil\] 
    degree $k$ points $P \in |X|$ which, when viewed as degree $k$ divisors, are non-fibral under $f$. 
\end{lemma}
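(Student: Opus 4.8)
The plan is to count the good points directly inside $X(\F_{q^k})$ and then divide by $k$. Write $S = X(\F_{q^k})$, so that the Weil bound gives $\#S \geq q^k + 1 - 2g q^{k/2}$. Since $k$ is prime, every closed point of $X$ whose degree divides $k$ has degree $1$ or $k$, so a degree-$k$ closed point is precisely a single Galois orbit $\{P, \Frob P, \dots, \Frob^{k-1}P\}$ of $k$ distinct points of $S$, and both non-fibral conditions are constant along such an orbit (ramification indices are $\Frob$-invariant). Hence I would count the points $P \in S$ lying on a non-fibral degree-$k$ orbit and divide by $k$.

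The crux is to translate the two non-fibral conditions into conditions on a single $P \in S$. Since $f$ is defined over $\F_q$ we have $f(\Frob^i P) = \Frob^i f(P)$, so the orbit of $P$ maps injectively under $f$ (equivalently $f(D)$ and $D$ have equal size) exactly when $f(P) \notin \PP^1(\F_q)$: if $f(P) \in \PP^1(\F_q)$ the whole orbit collapses onto $f(P)$, whereas if $f(P) \notin \PP^1(\F_q)$ then, $k$ being prime, $f(P)$ has degree exactly $k$, its $k$ conjugates are distinct, and $k \mid \deg P \mid k$ forces $\deg P = k$. Thus a point $P \in S$ is good precisely when $f(P) \notin \PP^1(\F_q)$ and the ramification index $e_P(f)$ equals the inseparable degree $\deg_i f$; in particular goodness already guarantees degree $k$, so I never have to subtract the degree-$1$ points separately. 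This yields
\[
\#\{\text{good } P \in S\} \;\geq\; \#S - \#\{P \in S : e_P(f) > \deg_i f\} - \#\{P \in S : f(P) \in \PP^1(\F_q)\}.
\]

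It remains to bound the two error sets, and the inseparability bookkeeping here is the main obstacle. I would factor $f = h \circ \Phi$ with $\Phi \colon X \to X'$ purely inseparable of degree $\deg_i f$ and $h \colon X' \to \PP^1$ separable of degree $\deg_s f$ (the separable closure of $\F_q(\PP^1)$ in $\F_q(X)$). As $\Phi$ is a homeomorphism with ramification index $\deg_i f$ everywhere, multiplicativity gives $e_P(f) = \deg_i f \cdot e_{\Phi(P)}(h)$, so $e_P(f) > \deg_i f$ happens exactly at the ramification points of the separable map $h$. Since $\Phi$ preserves the genus, Riemann--Hurwitz for $h$ bounds the number of such points by $\deg \mathfrak{d}_h = 2g - 2 + 2\deg_s f$. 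For the second set, every ramification index is at least $\deg_i f$, so each fiber of $f$ has at most $\deg_s f$ distinct points, whence $\#\{P \in S : f(P) \in \PP^1(\F_q)\} \leq (q+1)\deg_s f$.

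Combining these bounds and using $\deg_s f \leq \deg f$, I get $\#\{\text{good } P \in S\} \geq q^k + 1 - 2g q^{k/2} - (2g - 2 + 2\deg_s f) - (q+1)\deg_s f \geq q^k - 2g q^{k/2} - 2g - \deg f\,(q+3)$. Dividing by $k$ produces a lower bound for the number of non-fibral degree-$k$ closed points equal to the real quantity displayed inside the ceiling; since that number of points is an integer, it is at least the ceiling, which is exactly the claim. The only genuinely delicate steps are the equivalence ``orbit collision $\iff f(P) \in \PP^1(\F_q)$'' and the reduction of the ramification-index condition to Riemann--Hurwitz for the separable factor $h$.
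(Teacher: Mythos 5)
Your proof is correct and takes essentially the same route as the paper's: both split $f$ into its purely inseparable and separable parts, subtract the same three error terms (Weil defect, fibers over $\PP^1(\F_q)$, and the Riemann--Hurwitz bound on the ramification locus of the separable factor), and use primality of $k$ to pass from points of $X(\F_{q^k})$ to degree-$k$ closed points. The only cosmetic difference is that the paper transfers the count to the Frobenius twist $X^{(p^n)}$, whereas you stay on $X$ and track ramification indices multiplicatively through $f = h \circ \Phi$; your version is, if anything, slightly more explicit about the inseparability bookkeeping.
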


\begin{proof}
    Since $g$ is greater than $0$, $f$ is not a power of the Frobenius. Thus $f$ factors as follows
    \begin{center}
        \begin{tikzcd}
        X \arrow[r, "\mathrm{Frob}_{p^n}"] \arrow[bend right=30]{rr}{f} & X^{(p^n)} \arrow[r, "f'"] & \mathbb{P}^1,
        \end{tikzcd}
    \end{center}
    where $f':X^{(p^n)} \to \PP^1$ is a separable, nonconstant map from the Frobenius twist. Since $\Frob_{p^n}$ is a bijection between degree $k$ points on $X$ and $X^{(p^n)}$, it is enough to prove the result for the separable morphism $f'$.

    By the Weil conjectures, there are at least $q^k-2gq^{k/2}+1$ points in $X(\F_{q^k})$. At most $(\deg f') (q+1)$ among these points are sent to $\PP^1(\F_q)$ by $f'$. Moreover, since $f'$ is separable, by the Riemann--Hurwitz formula (Theorem \ref{thm:RH}), at most $2\deg f' + 2g -2$ points on $X$ are ramified. Thus we can choose $N$ points among $X(\F_{q^k})$, whose image under $f'$ does not belong to $\PP^1(\F_q)$, and which do not intersect the ramification locus of $f'$ for \[N=(q^k-2gq^{k/2}+1) - \deg f' (q+1) - (2\deg f' + 2g - 2).\]
    Note that, since $k$ is prime, any point in $X(\F_{q^k})\setminus X(\F_q)$, in particular those among the $N$ chosen, come from a degree $k$ point in $|X|$. Passing to Galois orbits, we have at least $\lceil N/k \rceil$ non-fibral degree $k$ points in $|X|$.
\end{proof}

We are now ready to prove the main result in the non-relative setting. For the applications to the relative setting, we need more than just a bound on the exponent, we also need an estimate on the \emph{number} of points in $\Pic^0 X$ of a given order $m$. We give both in the following theorem, from which Theorem \ref{thm:exponent} is an immediate consequence.
\begin{theorem}\label{thm:mainfull}
    Suppose $X/\F_q$ is a curve of gonality $\gon X>1$. Then the exponent $e$ of $\Pic^0 X$ satisfies
    \[e \geqslant \max\left(\frac{\gon X}{2\left\lceil\log_q (2g+1)\right\rceil}, \frac{g}{4(\gon X -1)\left\lceil\log_q(7g+1)\right\rceil}\right).\]
Moreover, given an integer $m$, letting $s$ denote $\lceil\gon X / m\rceil - 1$, and letting $k$ be the largest prime less than $g/(m(\gon X -1))$, one can choose at least 

\[N(m)=\max \left(q^{s/2}-3g, q^{k/2}-7g \right) \geqslant q^{\sqrt{g}/4m} - 7g\]

points $Q_1, \dots Q_N\in \Pic^0 X$, such that the order of the $Q_i$ and of the pairwise differences $Q_i-Q_j$ is at least $m$.
\end{theorem}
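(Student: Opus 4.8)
The plan is to construct, for each target order $m$, a large supply of \emph{distinct} classes in $\Pic^0 X$ that are forced to have order at least $m$, and to do so by two complementary mechanisms whose exponents multiply to $g$. Throughout I fix a gonal map $\gamma\colon X\to\PP^1$ of degree $\gon X$ and, exactly as in the proof of Lemma~\ref{lem:non-fibralcount}, I first factor out the inseparable part, so that I may assume every rational function in sight is separable; this costs nothing, since $\Frob$ is a bijection on closed points and preserves both degrees and gonality. The basic object is the map sending a closed point $P$ (with its associated degree-$d$ divisor $\tilde P$) to the class $[\tilde P-\tilde{P_0}]\in\Pic^0 X$ for a fixed base point $P_0$; such classes are $\F_q$-rational, and the whole argument is a lower bound on the size of the image of this map restricted to points of controlled order.

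The order of such a class is controlled in two ways. If $[\tilde P-\tilde{P_0}]$ has order $j$, then $j\tilde P\equiv j\tilde{P_0}$ produces a rational function $f$ with $\divv f=j\tilde P-j\tilde{P_0}$, so that $\deg f=j\deg\tilde P$. First, a pure gonality argument: taking $\deg\tilde P=s=\lceil\gon X/m\rceil-1$ one has $ms<\gon X$, so if $j<m$ then $\deg f\leqslant(m-1)s<\gon X$, contradicting minimality of the gonality; hence every such class has order at least $m$. Second, a Castelnuovo--Severi argument for the genuinely interesting regime $\deg\tilde P=k$: here I insist $P$ be \emph{non-fibral} for $\gamma$, so that $\tilde P$ meets the ramification and fibres of $\gamma$ as simply as possible, and I take $k$ to be the largest prime below $g/(m(\gon X-1))$. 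If $j<m$ then $\deg f<mk$, and the Castelnuovo--Severi inequality applied to $f$ and $\gamma$ gives $g\leqslant(\deg f-1)(\gon X-1)<mk(\gon X-1)<g$, a contradiction---\emph{provided} $f$ and $\gamma$ do not both factor through a common intermediate covering.

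The hard part is precisely this proviso: ruling out a common factorization $X\xrightarrow{h}Y$ through which both $f$ and $\gamma$ pass. This is where non-fibrality does the work. Since $\tilde P$ is non-fibral for $\gamma=\gamma'\circ h$, the observation following the definition shows $\tilde P$ is non-fibral for $h$, so in the separable reduction the points of $\tilde P$ stay distinct and unramified under $h$ and under $\gamma'$. Tracking the zero divisor $j\tilde P$ of $f=f'\circ h$ then forces $\deg f'\geqslant\deg f$ (the $k$ distinct points of $h(\tilde P)$ are each zeros of $f'$ of order $j$), while $\deg f'=\deg f/\deg h$; hence $\deg h=1$ and $h$ is an isomorphism, contradicting the assumption of a nontrivial factorization. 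I expect the careful bookkeeping of ramification indices here---matching the multiplicity $j$ along $\tilde P$ through $h$ against the inseparable degree---to be the most delicate step, and the reason the separable reduction must be arranged cleanly at the outset.

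It remains to count distinct classes and to assemble the pieces. For the counting I would combine the Weil estimate $\#X(\F_{q^d})\geqslant q^{d/2}(q^{d/2}-2g)$ with Clifford's theorem: a degree-$d$ class with $d<g$ is special, hence contains at most $\tfrac{q^{\lfloor d/2\rfloor+1}-1}{q-1}$ effective divisors, which bounds the fibres of the point-to-class map and so turns a $q^{d}$-sized point count into a $q^{d/2}$-sized count of \emph{distinct} classes; this yields $q^{s/2}-2g$ in the gonality case and, after subtracting the non-fibral corrections of Lemma~\ref{lem:non-fibralcount}, $\lfloor q^{k/2}-7g\rfloor$ in the Castelnuovo--Severi case. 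Finally, since $\tfrac{s}{2}\cdot\tfrac{k}{2}\approx\tfrac{g}{4m^2}$, the arithmetic--geometric mean inequality gives $\max(\tfrac s2,\tfrac k2)\geqslant\tfrac{\sqrt g}{2m}\geqslant\tfrac{\sqrt g}{4m}$, the slack absorbing Bertrand's postulate for the prime $k$ together with the various floors, and this produces the uniform bound $N(m)\geqslant q^{\sqrt g/4m}-7g$. The exponent statement then follows by choosing $m$ as large as possible subject to $N(m)\geqslant 1$: solving $q^{\sqrt g/4m}\geqslant 7g+1$ gives $e\geqslant m\geqslant \sqrt g/(4\log_q(7g+1))$, and the two analogous choices forcing $q^{s/2}\geqslant 2g+1$ and $q^{k/2}\geqslant 7g+1$ give the two terms of the displayed maximum.
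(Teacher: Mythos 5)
Your proposal follows the paper's argument in all of its essential features: the two\hyphenation{two}-regime split (degree-$s$ divisors ruled out by the gonality, degree-$k$ non-fibral divisors ruled out by Castelnuovo--Severi), the use of non-fibrality to exclude a common factorization $h$ of $f$ and $\gamma$ (your degree count on $f'$ is an equivalent rephrasing of the paper's observation that the fiber $f^{-1}(f(P))$ would have to contain $h^{-1}(h(P))$, which strictly contains $\tilde P$), and the AM--GM combination $\max(s,k)\gtrsim\sqrt{sk}\approx\sqrt g/m$ for the uniform bound. So the skeleton is right.

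The one step where you genuinely diverge is the count of \emph{distinct} classes, and there your method does not deliver the constants in the statement. You bound the fibres of $P\mapsto[\tilde P-\tilde P_0]$ via Clifford's theorem, i.e.\ by $(q^{\lfloor d/2\rfloor+1}-1)/(q-1)$ effective divisors per class; since each degree-$d$ closed point also contributes $d$ points of $X(\F_{q^d})$, dividing the Weil count $\approx q^{d/2}(q^{d/2}-cg)$ by the fibre size yields only about $(q-1)(q^{d/2}-cg)/(dq)$, which falls short of the claimed $q^{s/2}-2g$ and $\lfloor q^{k/2}-7g\rfloor$ by a factor of roughly $2d$. The Clifford detour is also unnecessary: in the $s$-regime one has $ms<\gon X$, so distinct effective divisors of degree $s$ are never linearly equivalent (each has $h^0=1$) and every Galois orbit already gives its own class; in the $k$-regime, distinctness of the classes $[\tilde P_i-\tilde Q]$ is exactly the statement that $[\tilde P_i-\tilde P_j]$ is nonzero, which your own Castelnuovo--Severi argument (applied with multiplier $1$, or via the order bound $\geqslant m\geqslant 2$) already supplies. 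This is how the paper counts, and it is what produces the stated $N(m)$. A second, smaller discrepancy: deriving the first term of the maximum from ``$N(m)\geqslant 1$ with $N(m)=q^{s/2}-2g$'' loses a unit because $s=\lceil\gon X/m\rceil-1$, and gives $\gon X/(2\lceil\log_q(2g+1)\rceil+1)$ rather than the stated bound; the paper instead proves that term directly by exhibiting two non-conjugate points of degree $d=\lceil 2\log_q(2g+1)\rceil$ and noting that $e\tilde P\equiv e\tilde Q$ forces $ed\geqslant\gon X$. Both issues are repairable without new ideas, but as written the proposal proves a quantitatively weaker statement than the theorem.
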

\begin{proof} We prove both inequalities on the exponent by building a pair $(P,Q)$ of low degree points on $X$ using Lemma \ref{lem:non-fibralcount} and showing, for low values of $e$, that the linear equivalence $eP \equiv eQ$ is not possible. For the second claim, we use the same argument to show that it is possible to build $N+1\geqslant N(m)+1$ points $P_1, \dots, P_{N+1}$ such that all pairwise differences $P_i-P_j$ have order at least $m$; taking $Q_i=P_i-P_{N+1}$ for $i=1,\dots, N$ proves the claim.
 
\noindent \textbf{Step 1: The first term of each maximum.}

We first prove the inequality \[e \geqslant \frac{\gon X}{2 \left\lceil\log_q(2g+1)\right\rceil}.\]

By the Weil bounds, there are at least\footnote{the inequality can be seen by collecting $q^{s}$-term on the left and $q^{s/2}$-terms on the right} two non-Galois-conjugate points $P,Q \in X(\F_{q^d})$ for  $d=\left\lceil 2\log_q(2g+1)\right\rceil$. Let $\tilde{P}, \tilde{Q}$ be degree $d$ divisors on $X$ associated to $P, Q$. If \[e < \dfrac{\gon X}{2\left\lceil \log_q(2g+1)\right\rceil}\] holds, then the linear equivalence $e\tilde{P} \equiv e\tilde{Q}$ is an equivalence of distinct effective divisors of degree less than $\gon X$, and so the rational function realizing this equivalence would have degree less than the gonality, a contradiction. Similarly, by the Weil bounds there are at least 
\[\frac{1}{s}\left(q^s-2gq^{s/2}+1\right)\geqslant q^{s/2} - 2g\geqslant q^{s/2}-3g+1 \]
effective divisors on $X$ of degree $s=\lceil \gon X/m\rceil-1$, obtained from Galois orbits of points in $X(\F_{q^s})$. Denote these divisors by $D_1, \dots D_{N+1}$, with $N\geqslant q^{s/2}-3g$.  Since no two distinct effective divisors of degree $ms$ are linearly equivalent, taking $Q_i=D_i-D_{N+1}$ for $1\leqslant i \leqslant N$, proves the second claim of the theorem (with the first term of the maximum).  

\noindent\textbf{Algebra Intermission:  The second bound on $e$. }

What is left to prove is the existence of at least  
$ q^{k/2} -7g  $ 
points in $Q_1, \dots, Q_N\in \Pic^0X$ of order at least $m$, whose pairwise differences have order at least $m$, as claimed in the theorem. Combined with the above discussion, this would complete the proof that there are at least 
\[N(m) = \max \left(q^{s/2} - 3g, q^{k/2}-7g   \right)\]
elements of $\Pic^0 X$ with the desired properties. This also implies the full bound on the exponent, since  for \[m = \left\lceil \frac{g}{4(\gon X - 1)\left\lceil\log_q(7g+1)\right\rceil} \right \rceil, \]  Bertrand's postulate applied to bound $k$ gives $N(m)\geqslant 1$, thus proving the inequality \[e \geqslant  \frac{g}{4(\gon X - 1)\left\lceil\log_q(7g+1)\right\rceil}. \]

\noindent\textbf{Step 2:   The second bound, $N(m) \geqslant  q^{k/2}-7g $.}

Suppose $\gamma: X \to \PP^1$ is a function of degree $\gon X$. Note that when $\gon X > g$, the inequality becomes trivial, so we assume in addition  $\gon X \leqslant g$; in fact such an inequality holds for all but finitely many curves by the work of Faber--Grantham--Howe \cite{faber2024maximumgonalitycurvefinite}. By Lemma \ref{lem:non-fibralcount} there are at least $N+1$ degree $k$ points which are non-fibral with respect to $\gamma$ whenever $k$ is a prime such that

    \[q^k - 2g(q^{k/2} + 1) - \deg \gamma (q+3) > Nk. \]
   Since $q \geq 2$ and $k \geq 2$, the previous displayed inequality holds whenever 

\[ \frac{1}{k}(q^k - 3gq^{k/2} - \deg \gamma (q+3)) > N.\]

  Further, since $\deg \gamma \leqslant \gon(X)$, $\gon(X) \leqslant g$, and $q+3 < 3q^{k/2}$, the previous displayed inequality holds whenever

\[ \frac{1}{k}(q^k - 3gq^{k/2} - 3gq^{k/2}) \geqslant N.\]

Equivalently, 

\[ N \leqslant \frac{1}{k}(q^k - 6gq^{k/2}).\]

Let $k$ be the largest prime less than $g/(m (\gon X - 1))$.
With this choice of $k$, denote by $P_1,...,P_{N}, P_{N+1}$ degree $k$ points that are non-fibral with respect to $\gamma$. We now prove that the pairwise differences $P_i-P_j$ have order at least  \[m=\left \lceil \frac{g}{k (\gon X - 1)} \right \rceil;\]
since we can take \[N=\left\lfloor \frac{1}{k}\left(q^{k}-6gq^{k/2}\right)\right\rfloor\geqslant q^{k/2}-7g\]
this will prove the inequality  \[N(m)\geqslant q^{k/2}-7g\] for $k$ the largest prime less than \[\frac{g}{m (\gon(X)-1)}.\]

Suppose $f: X \to \PP^1$ is a function of degree $ek$ associated to the linear equivalence $eP_i \equiv eP_j$. If $g>(\gon X - 1)(ek-1)$, then the map $(f, \gamma): X \to \PP^1 \times \PP^1$ cannot be birational to its image, and so there is a covering $\phi: X \to Z$ through which both $\gamma$ and $f$ factor. Moreover, the genus of $Z$ is less than $(\gon X - 1)(ek-1)$, and so $\phi$ is not purely inseparable. We assumed that both $P_i, P_j$ are non-fibral with respect to $\gamma$, and so the same is true under $\phi$. Consider the set $f^{-1}(f(P_i))$. On one hand, the construction of $f$ ensures that this set consists of the single point $P_i$. On the other hand, $f^{-1}(f(P_i))$ contains $\phi^{-1}(\phi(P_i))$, which has points not from $P_i$, since $P_i$ is non-fibral under $\phi$; this is a contradiction. Thus $g<(\gon X -1)(ek-1)$, which gives the desired inequality.    \qedhere
\end{proof}

\section{Exponents of relative class groups}\label{sec:relative}

Here we restate and prove Theorem \ref{thm:relative}, which addresses the case of the relative class group $\Pic^0 X_1/\phi^{*}\Pic^0 X_2$ of a separable covering $\phi: X_1 \to X_2$. 
\begin{theorem}\label{thm:relative-full}
    Suppose $\phi:X_1 \to X_2$ is a separable covering of curves of respective genera $g_1, g_2$. Suppose $g_1 \geqslant 2$ and $g_2>0$. Consider the exponent $e$ of the relative class group $\Pic^0 X_1/\phi^{*}\Pic^0 X_2$. Then the following hold
\begin{enumerate}
    \item\label{case:highdegr} $e \geqslant \lfloor c \sqrt{g_1}/g_2 \rfloor$, where $c=\min \left(1/16, \dfrac{g_2}{4\log_q(14g_1+1)}\right);$
    \item[]
    \item $e \geqslant \dfrac{g_1^{1/2}}{8 \lceil\log_q(19g_1)\rceil (\deg \phi -1)}.$\label{case:lowdegr}
\end{enumerate}
\end{theorem}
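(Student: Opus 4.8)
The backbone of both bounds is the same translation of the exponent hypothesis into a statement about rational functions, replacing the equivalence $e\tilde P \equiv e\tilde Q$ of Theorem \ref{thm:mainfull}. First I would record the elementary identity that for every $D \in \Pic^0 X_1$ one has $e\,\phi^*\phi_* D \equiv e(\deg\phi)\,D$: since $e$ kills the class of $D$ in $\Pic^0 X_1/\phi^*\Pic^0 X_2$, we have $eD \equiv \phi^* E$ for some $E \in \Pic^0 X_2$, and applying $\phi^*\phi_*$ together with $\phi_*\phi^* = \deg\phi$ gives $e\phi^*\phi_* D \equiv \phi^*((\deg\phi)E) \equiv e(\deg\phi)D$. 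This is the main geometric observation advertised in the outline.

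Next I would feed in good divisors. By Lemma \ref{lem:non-fibralcount} applied to $\phi$ (intersected, for part (1), with the non-fibral locus of an auxiliary map introduced below), there are many degree-$k$ points on $X_1$ non-fibral for $\phi$, with $k$ a prime of size $\approx \log_q g_1$. Taking two such points $P,Q$ in distinct fibers of $\phi$ and setting $D = P-Q$, the identity yields a nonconstant $f\colon X_1 \to \PP^1$ with $\divv f = e(\deg\phi)(P-Q) - e\,\phi^*\phi_*(P-Q)$. Non-fibrality makes the fibers $\phi^*\phi_* P,\ \phi^*\phi_* Q$ reduced and disjoint from $P,Q$ and each other, so after cancelling the common $eP+eQ$ the zero and pole divisors are supported on $\{P\}\cup\supp R_Q$ and $\{Q\}\cup\supp R_P$, where $R_P = \phi^*\phi_* P - P$, and $\deg f = 2ek(\deg\phi-1)$ (or $2mk(\deg\phi-1)$ with $m\le e$ if $[P-Q]$ has smaller relative order, which only helps).

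For part (2), small $\deg\phi$, I would compare $f$ with $\phi$ via Castelnuovo--Severi. If $(f,\phi)\colon X_1 \to \PP^1\times X_2$ were not birational onto its image, both maps would factor through a common $\psi\colon X_1\to Z$ with $\deg\psi>1$; as $\phi$ factors through $\psi$, non-fibrality of $P$ passes to $\psi$, so $\psi^{-1}(\psi(P))$ contains $P^\ast\ne P$. Then $P^\ast \in f^{-1}(f(P)) = \{P\}\cup\supp R_Q$ forces $\phi(P^\ast)=\phi(Q)$, while $\psi(P^\ast)=\psi(P)$ and $\psi\mid\phi$ force $\phi(P^\ast)=\phi(P)$, contradicting that $P,Q$ lie in distinct fibers. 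Hence $(f,\phi)$ is birational onto its image and $g_1 \le (\deg\phi)g_2 + (\deg f - 1)(\deg\phi-1)$. Since $\deg f \le 2ek(\deg\phi-1)$, the inequality $(\deg f-1)(\deg\phi-1)\ge g_1-(\deg\phi)g_2$ forces $e \gtrsim (g_1-(\deg\phi)g_2)/\!\left(k(\deg\phi-1)^2\right)$; I would then check that this yields the clean form $g_1^{1/2}/\!\left(8\lceil\log_q(19g_1)\rceil(\deg\phi-1)\right)$, the stated bound being automatic once its right-hand side drops below $1$.

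The genuinely different and hardest point is part (1), large $\deg\phi$, whose bound $\lfloor c\sqrt{g_1}/g_2\rfloor$ is free of $\deg\phi$. The obstacle is that every function built from the identity has degree proportional to $\deg\phi$, because $\phi^*$ multiplies degrees by $\deg\phi$, so the comparison with $\phi$ degrades. To obtain a $\deg\phi$-free estimate I would route the argument through a low-degree pencil $\gamma_2\colon X_2\to\PP^1$ of degree at most $\gon X_2 \lesssim g_2$, using it to trade full $\phi$-fibers for the much smaller fibers of $\gamma_2$ and thereby manufacture two functions on $X_1$ of degree $\approx e\cdot\gon X_2$ rather than $\approx e\cdot\deg\phi$; comparing these against the target $\PP^1$ by Castelnuovo--Severi (so no $g_2$-term enters) gives $g_1 \lesssim (e\,g_2)^2$, i.e. $e \gtrsim \sqrt{g_1}/g_2$, with $c=\min(1/16,\,g_2/(4\log_q(14g_1+1)))$ interpolating into the $g_2$-small regime where the estimate reduces to Theorem \ref{thm:mainfull} on $X_1$. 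The crux I expect to fight with is exactly the common-factor step in this $X_2$-routed construction: arranging the two functions so that a shared factor still contradicts non-fibrality with respect to $\phi$ (or $\gamma_2\circ\phi$) while keeping their degrees governed by $g_2$ and not by $\deg\phi$.
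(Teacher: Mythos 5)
Your reduction to the linear equivalence $e\,\phi^*\phi_*D \equiv e(\deg\phi)D$, the construction of the degree $2ek(\deg\phi-1)$ function $f$ from two non-fibral points in distinct $\phi$-fibers, and the argument ruling out a common factor of $f$ and $\phi$ are all correct and match the paper's strategy for part (2). But the Castelnuovo--Severi step in your part (2) is applied to the wrong pair of maps, and this is a genuine gap. Comparing $f$ with $\phi$ gives $g_1 \leqslant (\deg\phi)g_2 + (\deg f-1)(\deg\phi-1)$, and by Riemann--Hurwitz $(\deg\phi)(g_2-1)\leqslant g_1-1$, so $(\deg\phi)g_2$ can be as large as $g_1-1+\deg\phi$; for an unramified cover the inequality $g_1\leqslant(\deg\phi)g_2+(\deg f-1)(\deg\phi-1)$ holds automatically for \emph{every} $f$, so no lower bound on $\deg f$ (hence on $e$) comes out. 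Concretely, for an unramified double cover of a genus-$g$ curve the stated bound is nontrivial but your inequality yields nothing, and your proposed cleanup to $g_1^{1/2}/(8\lceil\log_q(19g_1)\rceil(\deg\phi-1))$ cannot go through since your raw bound $e\gtrsim (g_1-(\deg\phi)g_2)/(k(\deg\phi-1)^2)$ can be negative. The paper instead compares $f$ with a gonal map $\gamma\colon X_1\to\PP^1$ (genus-zero target, so no $g_2$-term), getting the dichotomy $\deg f\geqslant \gon X_1$ or $(\deg f-1)(\gon X_1-1)\geqslant g_1$, whence $\deg f\geqslant\sqrt{g_1}$; the price is that the ``common factor $\psi$'' contradiction must be run against $\gamma$ rather than $\phi$, which requires an extra genericity condition on $Q$ (the paper's condition ruling out $\gamma(\tilde Q')=\gamma(\tilde P)$ when $\deg\psi=2$).

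For part (1) you have correctly diagnosed that any function built from the identity has degree proportional to $\deg\phi$, but the route you propose through a gonal pencil on $X_2$ is left as an acknowledged open difficulty rather than a proof, and it is not what the paper does. The paper's part (1) requires no new geometry at all: Theorem \ref{thm:mainfull} is stated so as to produce $N(m)\geqslant q^{\sqrt{g_1}/4m}-7g_1$ elements of $\Pic^0 X_1$ of order at least $m$, and one simply compares this count with the Weil bound $\#\Pic^0 X_2<(\sqrt q+1)^{2g_2}$: if $N(m)>\#\phi^*\Pic^0 X_2$ then the quotient must have exponent at least $m$, and solving $q^{\sqrt{g_1}/4m}\gtrsim q^{2g_2+2}$ gives $m\approx\sqrt{g_1}/g_2$, which is exactly why the bound in part (1) is independent of $\deg\phi$. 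So the missing idea is to use the \emph{number} of high-order classes from the absolute theorem as a counting weapon against $\#\Pic^0 X_2$, rather than to build new rational functions.
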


\begin{proof}
We first show part \eqref{case:highdegr} of the claim. By Theorem \ref{thm:mainfull}, there are at least $N(m)$ points on $\Pic^0 X_1$ such that their order, and the order of their pairwise differences is at least $m$. If $N(m)$ is larger than $\#\Pic^0 X_2$, then the exponent $e$ of the quotient $\Pic^0X_1/\phi^*\Pic^0 X_2$ is at least $m$. To finish the proof of the first claim, we just need to compare the Weil bound $\# \Pic^0 X_2 < (\sqrt{q}+1)^{2g_2}$ with the formula for $N(m)$. In other words, whenever
\begin{equation*}\label{eq:mfromNm}
q^{\sqrt{g_1}/4m}-7g_1 > (\sqrt{q}+1)^{2g_2}+1
\tag{$\diamond$}
\end{equation*}
 holds, the exponent $e$ is at least $m$. Comparing the main terms, we deduce that, up to a constant factor, this holds when $m \approx \sqrt{g_1}/(2g_2)$. To get a cleaner exact statement (with non-optimal multiplicative constant), we set $c=\min\left(1/16, \frac{g_2}{4\log_q(14g_1+1)}\right)$, and let $m=\lfloor c\sqrt{g_1}/g_2 \rfloor$. With this notation, we observe that, since \[m<c\sqrt{g_1}/g_2 < \frac{\sqrt{g_1}}{4\log_q(14g_1+1)},\] we get
 \begin{equation*}
     7g_1<\frac{1}{2}q^{\sqrt{g_1}/4m}.
\end{equation*}
Using the previous line, we can bound left hand side of \eqref{eq:mfromNm} from below like so:

\[q^{\sqrt{g_1}/4m}-7g_1>q^{\sqrt{g_1}/4m - 1}.\]

At the same time, the right hand side of \eqref{eq:mfromNm} is bounded above by $q^{2g_2+1}$. Thus the inequality \eqref{eq:mfromNm} is satisfied whenever \[\frac{\sqrt{g_1}}{4m} \geqslant 2+2g_2.\]
Using $m\leqslant  c\sqrt{g_1}/g_2$ gives 
\[g_2/4c\geqslant 2+2g_2,\] which is trivially satisfied since $c \leqslant 1/16$. This completes the proof of case \eqref{case:highdegr}.

We now prove claim \eqref{case:lowdegr}. Let $\gamma: X_1 \to \PP^1$ denote a gonal map for $X_1$. The idea is to proceed similarly to the proof of Theorem \ref{thm:mainfull}: we will consider divisors of the form $D=\tilde{P}-\tilde{Q}$ for low degree points $P, Q$ on $X_1$, and will prove that $eD$ is not obtained by pulling back a divisor class from $X_2$. The key property of divisors $D$ pulled back from $X_2$ is $\phi^*\phi_* D \equiv \deg \phi \ D$; this is a linear equivalence between low degree divisors, which for suitable $D$ cannot occur.

To begin, we need to produce two low degree points $P,Q$ on $X_1$ that behave generically with respect to $\phi$ and $\gamma$, in a similar vein to Lemma \ref{lem:non-fibralcount}. We want to find the points meeting the three conditions below. 
    \begin{enumerate}[label={(\alph*)}]
    \item The Galois orbits $\tilde{P}, \tilde{Q} \subset X_1$ of $P,Q$ are non-fibral with respect to $\gamma$ and $\phi$.\label{thm2:cond1}
    \vspace{1mm}
    \item The Galois orbits of $\phi(P), \phi(Q) \in X_2$ are distinct.  \label{thm2:cond2}
    \vspace{1mm}
    \item Let $\tilde{Q}'$ denote the effective divisor $\phi^*(\phi_*(\tilde{Q}))-\tilde{Q}$. Then if $\tilde{Q}'$ happens to contain $k$ points (with multiplicities $\deg \phi -1$), then $\gamma(\tilde{Q}') \neq \gamma(\tilde{P})$.\footnote{This specific behavior of multiplicities of points in $\tilde{Q}'$ is probably very rare except for the case $\deg \phi =2 $.} \label{thm2:cond3}
    \end{enumerate}
Let $k$ be prime. We proceed as in the proof of Theorem \ref{thm:mainfull}. Combining the Weil bounds with the Riemann--Hurwitz formula, the set $M_{\phi, \gamma}$ of points in $X_1(\F_{q^k})$ whose Galois orbits are non-fibral with respect to $\phi$ and $\gamma$ has size at least
\begin{multline*}
    \#M_{\phi, \gamma} \geqslant\#X_1(\F_{q^k}) - \deg \phi\#X_2(\F_q) - \#\mathrm{Ram} (\phi) - \deg \gamma\#\PP^1(\F_q) - \#\mathrm{Ram}(\gamma) 
    \geqslant \\
 (q^k - 2g_1q^{k/2}  + 1)  - (q+ 2g_2 q^{1/2} + 1) \deg \phi  - (2g_1 - 2)  - (q+ 1) \gon X_1 -(2\gon X_1 +2g_1 - 2);
\end{multline*}

here $\mathrm{Ram}(f)$ denotes the locus where the ramification index of a covering $f:X \to Y$ is larger than its inseparable degree.

Once $P$ is chosen, conditions \ref{thm2:cond2} and \ref{thm2:cond3} remove at most $k \deg \phi + k \gon X_1$ points from $M_{\phi, \gamma}$ as candidates for $Q$. Thus we can produce two points $P,Q$ satisfying assumptions \ref{thm2:cond1}, \ref{thm2:cond2}, \ref{thm2:cond3} whenever
 \begin{multline*} (q^k - 2g_1q^{k/2}  + 1) - (2\gon X_1 +2g_1 - 2) - (2g_1 - 2)  \\ - (q+ 1) \gon X_1 - (q+ 2g_2 q^{1/2} + 1) \deg \phi \geqslant k(\gon X_1 + \deg \phi) + 1.
 \end{multline*}

Since $\gon X_1 \leqslant g_1 + 1$, the above holds whenever 
\[(q^k - 2g_1q^{k/2} ) - (6g_1 - 2) - (q+ 1)(g_1 + 1) - (q+ 2g_2 q^{1/2} + 1) \deg \phi \geqslant k(\gon X_1 + \deg \phi).\]
Collecting $k$-dependent terms on the left, we have
\[q^k - 2g_1q^{k/2} - k(\gon X_1 + \deg \phi)  \geqslant (6g_1 - 2) + (q+ 1)(g_1 + 1) + (q+ 2g_2 q^{1/2} + 1) \deg \phi  . \]
Since part \eqref{case:lowdegr} holds trivially for $\deg \phi >1/4 g_1^{1/2}$, we have $\deg \phi + \gon X_1 \leqslant 2g_1$ and so we simplify to

\[q^k - 2g_1q^{k/2} - 2g_1q^{k/2}  \geqslant (6g_1 - 2) + (q+ 1)(g_1 + 1) + (q+ 2g_2 q^{1/2} + 1) \deg \phi  . \]
Since, by Riemann--Hurwitz, $(2g_2-2)\deg \phi \leqslant 2g_1-2$ we have

\[q^k - 4g_1q^{k/2}  \geqslant 6g_1q^{k/2} + 4g_1q^{k/2} + 2g_1q^{k/2} + (q+ 2q^{1/2} + 1) \deg \phi  . \]

Since the statement of part \eqref{case:lowdegr} becomes trivial when  $\deg \phi > \sqrt{g_1}$, we can assume that $\deg \phi \leqslant \sqrt{g_1}$, and so it is enough to verify

\[q^{k}-16g_1q^{k/2} \geqslant 3q^{k/2}g_1.\]
Whence,
\[k \geqslant 2 \log_q(19 g_1).\] 
 
Let $2\lceil\log_q(19g_1)\rceil \leqslant k  \leqslant 4\lceil\log_q(19g_1)\rceil$ be a prime number. Now we may choose $P, Q \in X(\F_{q^k})$ satisfying \ref{thm2:cond1}, \ref{thm2:cond2} and \ref{thm2:cond3}. Let $\tilde{P}, \tilde{Q}$ denote the Galois orbits of $P, Q$ viewed as degree $k$ divisors.  Let $\tilde{P}'$ denote the effective divisor $\phi^*\phi_* \tilde P - \tilde P$, and similarly for $\tilde{Q}'$. Since $\phi (\tilde{P}) \neq \phi(\tilde{Q})$, we have in addition that $\tilde{P}' \neq \tilde{Q}$. We claim that if $e$ is small enough, the divisor class $e(\tilde{P}-\tilde{Q}) \in \Pic^0 X_1$ does not belong to $\phi^* \Pic^0 X_2$. Indeed, otherwise we would have a linear equivalence \[e \phi^*\phi_*(\tilde{P}-\tilde{Q}) \equiv  e (\deg \phi) (\tilde{P}-\tilde{Q}).\]
Rearranging terms this reads
\begin{equation}\label{linear-equiv-2}
e\tilde{P}'+e(\deg \phi - 1)\tilde{Q}\equiv e\tilde{Q}'+e(\deg \phi - 1)\tilde{P}.\tag{$\star$}
\end{equation}

This is an equivalence of effective divisors of degrees $2ek(\deg \phi-1) $. Assume that $e$ is small enough so that \[2ek(\deg \phi -1)  < \max \left(\gon X_1, g_1/(\gon X_1 - 1)\right).\]

We will arrive at a contradiction similarly to the proof of Theorem \ref{thm:mainfull}. Observe that, since $\phi_*\tilde{P} \neq \phi_* \tilde{Q}$, the linear equivalence \eqref{linear-equiv-2} has no base locus. In other words, the two sides of \eqref{linear-equiv-2} are fibers of a rational function $f$. Clearly,  $\deg f = 2ek(\deg \phi-1)$ is at least as large as $\gon X_1$, and so we can assume $2ek(\deg \phi -1)  < g_1/(\gon X_1 -1)$. The last inequality implies  $(\deg f -1)(\deg \gamma -1) < g_1$, and so with the Castelnuovo--Severi inequality (Theorem \ref{thm:CS}) applied to the pair of functions $f, \gamma$, we can find a map $\psi: X_1 \to Z$ through which both $f$ and $\gamma$ factor. Observe that since $\psi$ is a factor of $\gamma$, every Galois conjugate of $P$ is non-fibral, and in particular unramified, with respect to $\psi$. On the other hand, since $D=e\tilde{Q}'+e(\deg \phi -1 )\tilde{P}$ is a fiber of $f$, and $f$ factors through $\psi$, $D$ has to contain the divisor $E=e(\deg \phi-1)\psi^{-1}(\psi(\tilde{P}))$. But $\deg E> \deg D$, unless $\deg \psi =2$. Thus the only possibility is $\deg \psi = 2$, $\tilde{Q}'$ consists of $k$ points with multiplicity $\deg \phi -1$ each, and $\psi(\tilde{Q}')=\psi(\tilde{P})$. Since $\gamma$ factors through $\psi$ this does not occur by condition \ref{thm2:cond3}. This gives the desired contradiction.

Thus \begin{equation}\label{eq:relativemax}e\geqslant \frac{\max \left(\gon X_1, g_1/(\gon X_1 - 1)\right)}{2k( \deg \phi-1)} \geqslant \frac{g_1^{1/2}}{8(\deg \phi -1 )\lceil \log_q(19g_1) \rceil}.\qedhere
\end{equation}
\end{proof}

\begin{remark}\label{rmk:relativefornumerics}
    Using \eqref{eq:relativemax} to bound $e$ numerically may be advantageous relative to using the bound on $e$ from the statement of Theorem \ref{thm:relative-full}. More generally, throughout we have relied on rather rough inequalities (such as $x<q^x$), which do not harm the asymptotic shape of the result, but are best avoided in a computational setting. In the same vein, the strategy of the proof can also be applied to divisors of composite degree, possibly shaving off a factor of $2$ that comes from Bertrand's postulate. 
\end{remark}

Theorem \ref{thm:relative-full} motivates the following question.
\begin{question}\label{mainquestion}
    Given an integer $N$, is the set of separable coverings $\phi: X_1 \to X_2$ of curves of genus larger than $2$ over a finite field, for which the relative class group has exponent $N$, finite?
\end{question}

For $N=1$, this is answered affirmatively in \cite{Kedlaya1, kedlaya2, kedlaya3}, but even the case $N=2$ remains open.  For more general bounds, as well as a solution to the relative class number $2$ problem see \cite{PinerosEtAl}.

Theorem \ref{thm:relative-full} does not answer Question \ref{mainquestion}, though it restricts the possibilities for the triples $(g_1, g_2, \deg \phi)$; in particular, we see that low relative class number implies mild ramification of the covering. According to our inequalities, it is natural to speculate that the most interesting case to consider is that of an unramified cover of degree $g_2-1$.

\section*{Acknowledgments}
We thank David Kazhdan for asking a question that lead us to discover Theorem  \ref{thm:exponent}. We also thank Santiago Arango-Pi{\~n}eros for his comments on an earlier draft of this paper and two anonymous referees for their suggestions. 

\bibliographystyle{alpha}
\bibliography{bibliography}
\end{document}